\newcommand{\rl}{\mathbb{R}}
\newcommand{\cx}{\mathbb{C}}
\newcommand{\ai}{\sqrt{-1}}
\newcommand{\inj}{\hookrightarrow}
\newcommand{\isom}{\stackrel{\sim}{\to}}
\newcommand{\kah}{K\"ahler }
\newcommand{\ddbar}{\partial \bar{\partial}}
\newcommand{\tr}{\mathrm{tr}}
\newcommand{\prj}{\mathbb{P}}
\theoremstyle{plain}
\newtheorem{theorem}{Theorem}[section]
\newtheorem{lemma}[theorem]{Lemma}
\newtheorem{proposition}[theorem]{Proposition}
\theoremstyle{definition}
\newtheorem{definition}[theorem]{Definition}
\theoremstyle{definition}
\newtheorem{remark}[theorem]{Remark}
\begin{document}
\selectlanguage{english}

\title{Mapping properties of the Hilbert and Fubini--Study maps in K\"ahler geometry, with erratum}
\author{Yoshinori Hashimoto}

\maketitle


\selectlanguage{english}
\begin{abstract}
Suppose that we have a compact K\"ahler manifold $X$ with a very ample line bundle $\mathcal{L}$. We prove that any positive definite hermitian form on the space $H^0 (X,\mathcal{L})$ of holomorphic sections can be written as an $L^2$-inner product with respect to an appropriate hermitian metric on $\mathcal{L}$. We apply this result to show that the Fubini--Study map, which associates a hermitian metric on $\mathcal{L}$ to a hermitian form on $H^0 (X,\mathcal{L})$, is injective.
\end{abstract}

\section*{Erratum}

One of the main results of this paper, the surjectivity of the Hilbert map (in Theorem 1.1), is wrong. A counterexample was provided by Jingzhou Sun \cite[\S 3, Example]{Sun22}. Lemma 3.1, whose proof rests on the surjectivity of the Hilbert map, is also wrong. A counterexample was provided by L{\'a}szl{\'o} Lempert in an email correspondence with the author (see \cite[Proposition 4.16]{Finski22}). A precise statement on the closure of the image of the Hilbert map was given by Sun \cite[Theorems 1.2 and 1.3]{Sun22}.

The other result in Theorem 1.1, the injectivity of the Fubini--Study map, does hold, although the proof in this paper does not hold since Lemma 3.1 is wrong as stated above. An independent proof of this theorem, together with a more precise statement for the image of the Fubini--Study map, was given by Lempert \cite[Theorem 1.1]{Lem21}.

The author is so far unable to exactly point out which part of the proof in this paper breaks down, but both counterexamples mentioned above involve basis elements that are redundant in terms of the global generation of the line bundle (see the definition of a rather ample subspace in \cite[\S 6]{Lem21}).

A corrected, weaker version of Lemma 3.1 is available in an arXiv preprint \cite{yhqifs} by the author, but he believes that it is better to separate it as a new work because of its length, and because its statement is different and the proof involves ideas different from the ones used in this paper. Thus its details are not discussed in this erratum. The author sincerely apologises for the time it took to prepare this erratum, but it was necessary for him to find a corrected version of Lemma 3.1 and prove it.

The author thanks Siarhei Finski, L{\'a}szl{\'o} Lempert, and Jingzhou Sun for very helpful discussions and comments. This work is partially supported by JSPS KAKENHI (Grant-in-Aid for Scientific Research (C)), Grant Number JP23K03120, and JSPS KAKENHI (Grant-in-Aid for Scientific Research (B)) Grant Number JP24K00524.

\begin{flushleft}
{\footnotesize
Department of Mathematics, Osaka Metropolitan University, \\
3-3-138, Sugimoto, Sumiyoshi-ku, Osaka, 558-8585, Japan.\\
Email: \texttt{yhashimoto@omu.ac.jp}}
\end{flushleft}

\section{Introduction and statement of the result}

Let $(X,\mathcal{L})$ be a polarised \kah manifold of complex dimension $n$. We have
\begin{itemize}
	\item[$\bullet$] $\mathcal{H} (X,\mathcal{L}) :=$ the set of all positively curved hermitian metrics on $\mathcal{L}$,
	\item[$\bullet$]  $\mathcal{B} :=$ the set of all positive definite hermitian forms on $H^0 (X,\mathcal{L})$,
\end{itemize}
where $\mathcal{H} (X,\mathcal{L})$ is infinite dimensional and $\mathcal{B}$ is finite dimensional.

We can define the following two maps;
\begin{itemize}
	\item[$\bullet$] the \textbf{Hilbert map} $Hilb : \mathcal{H} (X,\mathcal{L}) \to \mathcal{B}$ defined by the $L^2$-inner product of $h \in \mathcal{H} (X,\mathcal{L})$,
	\item[$\bullet$] the \textbf{Fubini--Study map} $FS :  \mathcal{B} \to \mathcal{H} (X,\mathcal{L}) $ defined as the pullback of the Fubini--Study metric on the projective space $\prj (H^0 (X,\mathcal{L})^*)$.
\end{itemize}

The result that we prove in this paper is the following.
\begin{theorem}
	\label{hilbsjfsinj}
	Suppose that $\mathcal{L}$ is very ample. Then $Hilb$ is surjective and $FS$ is injective.
\end{theorem}

Since $Hilb$ is a map from an infinite dimensional manifold to a finite dimensional manifold, it seems natural to speculate that it is surjective. Similarly, it also seems natural to speculate that $FS$ is injective. Indeed, these statements seem to be widely believed among the experts in the field. However, the proof of these facts do not seem to be explicitly written in the literature previously, to the best of the author's knowledge; in fact, as we shall see, the proof that we give involves application of the degree of continuous maps and has connection to the Aubin--Yau theorem \cite{aubin,yaucy}. We shall provide the proof of these ``folklore'' statements in this paper.

For $FS$, we shall in fact prove a stronger quantitative result for the injectivity (cf.~Lemma \ref{lemfsinj}), which was applied in \cite{yhpreprint} to find a point in $\mathcal{B}$ that is close to the minimum of the modified balancing energy. Generalisations to several variants of the $Hilb$ map (cf.~Proposition \ref{lemsurjhilbnu}) will also be discussed in \S \ref{secvarhilb}.

\section*{Acknowledgements}
\begin{small}
Part of this work was carried out in the framework of the Labex Archim\`ede (ANR-11-LABX-0033) and of the A*MIDEX project (ANR-11-IDEX-0001-02), funded by the ``Investissements d'Avenir" French Government programme managed by the French National Research Agency (ANR). Much of this work was carried out when the author was a PhD student at the Department of Mathematics of the University College London, which he thanks for the financial support. Theorem \ref{hilbsjfsinj} and its proof form part of the author's PhD thesis submitted to the University College London.

The author thanks S\'ebastien Boucksom, Julien Keller and Jason Lotay for helpful discussions and comments. He is grateful to the anonymous referee for helpful suggestions that significantly improved the exposition of this paper, and for pointing out Lemma \ref{remphzrid}.
\end{small}

\section{Surjectivity of $Hilb$} \label{pfofsjtvthilb}

\subsection{Preliminary definitions} \label{secprelimdef}
Suppose that $\mathcal{L}$ is very ample. We shall first define the maps $Hilb$ and $FS$ more precisely as follows (cf.~\cite{donproj2}).

\begin{definition} \label{defhilbfs}
	The \textbf{Hilbert map} $Hilb : \mathcal{H} (X,\mathcal{L}) \to \mathcal{B}$ is defined by
\begin{equation*}
Hilb (h) : = \int_X h ( , ) \omega_h^n .
\end{equation*}

The \textbf{Fubini--Study map} $FS :  \mathcal{B} \to \mathcal{H} (X,\mathcal{L}) $ is defined by the equation
\begin{equation}
\sum_{i=1}^N |s_i|^2_{FS(H)} =1 \label{defoffseq}
\end{equation}
where $\{ s_i \}_i$ is an $H$-orthonormal basis for $H^0 (X , \mathcal{L})$ and $N:= \dim_{\cx} H^0(X , \mathcal{L})$.
\end{definition}

\begin{remark} \label{remscalhbfs}
	The above definition is slightly different from the ones that are often used in the literature by a factor of scaling, in the following sense. Often we consider the tensor power $\mathcal{L}^{\otimes k}$, and the usual definition of $Hilb$ concerns a hermitian metric $h$ on $\mathcal{L}$ (rather than $\mathcal{L}^{\otimes k}$), with $Hilb (h) : = \frac{N_k}{V} \int_X h^{\otimes k} ( , ) \frac{\omega_h^n}{n!}$ defining a hermitian form on the vector space $H^0 (X, \mathcal{L}^{\otimes k})$ of dimension $N_k$, with $V:= \int_X c_1(L)^n / n!$. For a hermitian form $H$ on $H^0 (X, \mathcal{L}^{\otimes k})$, $FS(H)$ is a hermitian metric on $\mathcal{L}$ satisfying $\sum_i |s_i|^2_{FS(H)^{\otimes k}} =1$.
	
	As far as Theorem \ref{hilbsjfsinj} is concerned, these differing conventions will not cause any problem, and we shall use the one given above in this paper not to be concerned with extra exponent of $k$ or the constant $N_k$. For the quantitative result Lemma \ref{lemfsinj}, however, this does require some straightforward modification which is mentioned in Remark \ref{remscalfsquan}. We shall consider $\mathcal{L}^{\otimes k}$ also in \S \ref{secvarhilb}.
\end{remark}
 
We also recall (and slightly modify) some definitions made by Bourguignon--Li--Yau \cite[\S 2]{bly} that we shall use in the proof of surjectivity of $Hilb$ in \S \ref{secpfsurjhilb}. Since $\mathcal{L}$ is very ample we have the Kodaira embedding $\iota : X \inj \prj(H^0(X,\mathcal{L})^*) \isom \prj^{N -1}$. First of all pick homogeneous coordinates $\{ Z_i \}_i$ on $\prj^{N-1}$; all matrices appearing in what follows will be with respect to this basis $\{ Z_i \}_i$, unless otherwise specified. This then defines a hermitian metric $\tilde{h} := \tilde{h}_{FS( \mathrm{Id} )}$ on $\mathcal{O}_{\prj^{N-1}} (1)$ and the Fubini--Study metric $\omega_{\widetilde{FS} ( \mathrm{Id} )}$ on $\prj^{N-1}$, where $\mathrm{Id}$ stands for the identity matrix. We write $d {\mu}_Z$ for the volume form on $\prj^{N-1}$ defined by $\omega_{\widetilde{FS} ( \mathrm{Id} )}$.

Suppose that we pick $B \in GL(N , \cx)$ and change the basis from $\{ Z_i \}_i$ to  $BZ := \{ Z'_i \}_i$ (where $Z'_i := \sum_j B_{ij} Z_j$). This defines a new Fubini--study metric $\omega_{\widetilde{FS}(H)}$ associated to the hermitian metric $\tilde{h}_{FS( H )}$, where $H := \overline{(B^{-1})^t} B^{-1}$; note that $H$ has $BZ $ as its orthonormal basis. We also write $d \mu_{BZ}$ for the volume form corresponding to $\omega_{\widetilde{FS}(H)}$.

\begin{definition}
	We define two spaces of hermitian matrices as follows:
	\begin{itemize}
		\item[$\bullet$] $\mathcal{J}^{\circ} : = \{ N \times N \text{ positive definite hermitian matrices with operator norm } 1 \} ,$
		\item[$\bullet$] $\mathcal{H}^{\circ} := \{ N \times N \text{ positive definite hermitian matrices with trace } 1 \}.$
	\end{itemize}
	
\end{definition}

Since $B \in \mathcal{J}^{\circ}$ is positive definite hermitian, the operator norm $|| B ||_{op}$ of $B$ being 1 is equivalent to the largest eigenvalue of $B$ being 1 (all eigenvalues of $B$ are real and positive). Observe also that any $N \times N$ positive definite hermitian matrix $B'$ can be divided by a positive real constant $\alpha_1 := ||B'||_{op}$ (resp.~$\alpha_2 := \tr(B')$) to be an element of $\mathcal{J}^{\circ}$ (resp.~$\mathcal{H}^{\circ}$).

In other words, $\mathcal{J}^{\circ}$ and $\mathcal{H}^{\circ}$ just denote two different scalings of the set of $N \times N$ positive definite hermitian matrices, and hence they are diffeomorphic under the map $\mathcal{J}^{\circ} \ni B \mapsto B / \tr(B) \in \mathcal{H}^{\circ}$. Artificial as it may seem, it will be useful in \S \ref{secpfsurjhilb} to fix the scalings as above.

Note $\dim_{\rl} \mathcal{J}^{\circ} = \dim_{\rl} \mathcal{H}^{\circ} =  N^2 -1$ and that $\mathcal{J}^{\circ}$ and $\mathcal{H}^{\circ}$ can be identified with connected bounded open subsets in $\rl^{N^2 -1}$. We endow them with the Euclidean topology inherited from $\rl^{N^2 -1}$.

With respect to this topology, we can compactify $\mathcal{J}^{\circ}$ (resp.~$\mathcal{H}^{\circ}$) to $\mathcal{J}$ (resp.~$\mathcal{H}$) by adding a topological boundary $\partial \mathcal{J}$ (resp.~$\partial \mathcal{H}$) defined as
\begin{itemize}
	\item[$\bullet$] $\partial \mathcal{J} : =  \{ N \times N \text{ positive semi-definite hermitian matrices with operator norm } 1 \text{ and rank} \le N-1 \}$,
	\item[$\bullet$] $\partial \mathcal{H} := \{ N \times N \text{ positive semi-definite hermitian matrices with trace } 1 \text{ and rank} \le N-1 \}$.
\end{itemize}

\subsection{Degree of continuous maps}

We summarise some well-known results on the degree of continuous maps between domains in Euclidean spaces. The reader is referred to \cite[Chapter 12]{amd} for more details. Let $\Omega$ be a bounded open subset in $\rl^m$ and $\bar{\Omega}$ be its closure in $\rl^m$ with respect to the Euclidean topology. We also write $\partial \Omega : = \bar{\Omega} \setminus \Omega$ for the boundary.

We first consider a $C^1$-map $\phi \in C^1 (\bar{\Omega} , \rl^m )$. Writing componentwise in terms of $\rl^m$, we can write $\phi = (\phi_1 , \dots , \phi_m)$. The \textbf{Jacobian} of $\phi$ at $x$ is defined as $J_{\phi}(x) := \det \left( \frac{\partial \phi_i}{\partial x_j} (x) \right)$. Recall that $x \in \bar{\Omega}$ is called a \textbf{critical point} of $\phi \in C^1 (\bar{\Omega} , \rl^m)$ if $J_{\phi} (x) = 0$. If $x$ is a critical point of $\phi$, $\phi (x)$ is called a \textbf{critical value}. Suppose that $\phi \in C^1 (\bar{\Omega} , \rl^m )$, $p \notin \phi (\partial \Omega )$, and $p$ is not a critical value of $\phi$. The \textbf{degree} of $\phi$ at $p$ relative to $\Omega$ is defined by
	\begin{equation} \label{defdegcone}
		\mathrm{deg} (\phi , \Omega , p) := \sum_{x \in \phi^{-1} (p)} \mathrm{sign} J_{\phi} (x).
	\end{equation}
Note $\mathrm{deg} (\phi , \Omega , p) =0$ if $\phi^{-1} (p) = \emptyset$.

If $p$ is a critical value of $\phi$, we define $\mathrm{deg} (\phi , \Omega , p)$ by slightly perturbing $p$ (cf.~\cite[Definition 12.4]{amd}). Moreover, for a continuous map $Q \in C^0 (\bar{\Omega} , \rl^m )$, we take a ``perturbation'' $\phi_{Q,p} \in C^1 (\bar{\Omega} , \rl^m )$ of $Q$ to define $\mathrm{deg} (Q , \Omega , p)$ as $\mathrm{deg} (\phi_{Q,p} , \Omega , p)$ (cf.~\cite[Definition 12.5]{amd}). These are well-defined, since they do not depend on the perturbations chosen \cite[Theorems 12.3-12.9]{amd}.

An important result is the homotopy invariance of the degree.

\begin{theorem} \emph{(cf.~\cite[Theorem 12.11]{amd})} \label{htpyinvdeg}
	Let $Q_0 , Q_1 \in C^0 (\bar{\Omega} , \rl^m)$ and $Q_t$ be a homotopy in $C^0 (\bar{\Omega} , \rl^m)$ between $Q_0$ and $Q_1$. Let $p \in \rl^m$ such that $p \notin Q_t (\partial \Omega )$ for all $t \in [0,1]$. Then $\mathrm{deg} (Q_0 , \Omega , p) = \mathrm{deg} (Q_1 , \Omega , p)$.
\end{theorem}

Another important result is that nontriviality of degree ``detects'' surjectivity.

\begin{theorem} \emph{(cf.~\cite[Theorem 12.10]{amd})} \label{nontrvdegsurj}
	Let $Q \in C^0 (\bar{\Omega} , \rl^m)$ and $p \notin Q (\partial \Omega )$. If $\mathrm{deg} (Q , \Omega , p) \neq 0$ then there exists $x \in \Omega$ such that $Q(x) = p$.
\end{theorem}

In what follows, we shall take $\Omega = \mathcal{J}^{\circ}$ and $\bar{\Omega} = \mathcal{J}$, and define a map $Q_X \in C^0(\bar{\Omega} , \rl^{N^2 -1})$ with $Q_X (\bar{\Omega}) \subset \mathcal{H}$ and $Q_X (\partial \Omega) \subset \partial \mathcal{H}$ (note that this implies $Q_X : \mathcal{J}^{\circ} \to \mathcal{H}^{\circ}$ is proper). We shall apply the above theorems to claim surjectivity of $Q_X$, which in turn proves the surjectivity of $Hilb$.

\subsection{Proof of surjectivity of $Hilb$} \label{secpfsurjhilb}

We now prove the first part of Theorem \ref{hilbsjfsinj}, i.e.~surjectivity of $Hilb$. After several technical definitions and lemmas, the main surjectivity result will be proved in Proposition \ref{lemsurjhilb}. The main line of the argument presented below is similar to Bourguignon--Li--Yau \cite[\S 2]{bly}.

\begin{definition} \label{defqprj}
	We define a continuous map $Q_{\prj} : \mathcal{J}^{\circ} \to \mathcal{H}^{\circ}$ as
\begin{equation*}
Q_{\prj} (B)_{ij} :=  \left( \int_{\prj^{N-1}} \frac{\sum_l |Z_l|_{\tilde{h}}^2}{\sum_l \left| \sum_m B_{lm} Z_m \right|_{\tilde{h}}^2} d \mu_{BZ} \right)^{-1} \int_{ \prj^{N-1}} \frac{\tilde{h} (Z_i ,  {Z}_j) }{\sum_l \left| \sum_m B_{lm} Z_m \right|^2_{\tilde{h}}} d \mu_{BZ},
\end{equation*}
where $Q_{\prj} (B)_{ij} $ stands for the $(i,j)$-th entry of the matrix $Q_{\prj} (B) \in \mathcal{H}^{\circ}$.
\end{definition}

Writing $\xi_B: \prj^{N-1} \isom \prj^{N-1}$ for the biholomorphic map induced from the linear action of $B \in \mathcal{J}^{\circ}$ on $\prj^{N-1}$, we note
\begin{align*}
Q_{\prj} ( \mathrm{Id} )_{ij} &= \left( \int_{\prj^{N-1} }  d \mu _Z \right)^{-1} \int_{ \prj^{N-1}} \frac{\tilde{h} (Z_i ,  {Z}_j) }{\sum_l \left|   Z_l \right|^2_{\tilde{h}}} d \mu_{Z} \\
&= \left( \int_{\prj^{N-1} }  (\xi_B^* d \mu_{Z}) \right)^{-1} \int_{ \prj^{N-1}} \xi_B^* \left( \frac{\tilde{h} (Z_i ,  {Z}_j) }{\sum_l \left|   Z_l \right|^2_{\tilde{h}}}  \right) (\xi_B^* d \mu_{Z}) \\
&= \left( \int_{\prj^{N-1} }  d \mu _{BZ} \right)^{-1}  \int_{ \prj^{N-1}} \frac{\sum_{l,m} \tilde{h} (B_{il} Z_l ,  B_{jm} Z_m) }{\sum_l \left| \sum_m B_{lm} Z_m \right|^2_{\tilde{h}}} d \mu_{BZ}
\end{align*}
and hence, recalling $\tr (Q_{\prj} (B)) =1$ and writing $B^t$ for the transpose of $B$, we get
\begin{equation} \label{defofpsizrb}
Q_{\prj} (B) = \frac{(B^t)^{-1} Q_{\prj} ( \mathrm{Id} ) (B^t)^{-1}}{ \tr ((B^t)^{-1} Q_{\prj} ( \mathrm{Id} ) (B^t)^{-1})},
\end{equation}
by recalling $B = B^*$ which implies $B^t = \bar{B}$.

We have the following lemma.

\begin{lemma}\label{remphzrid}
	$Q_{\prj} ( \mathrm{Id} )$ is a constant multiple of the identity matrix.
\end{lemma}

\begin{proof}
	We define a local coordinate system $(z_1 , \dots , z_{N-1})$ on a Zariski open subset $\mathcal{U}$ of $\prj^{N-1}$ defined by $Z_N \neq 0$, so that $z_i = Z_i / Z_N$. We write $z_i$ in polar coordinates as $z_i = r_i e^{\ai \theta_i}$.
	
	Assuming $i, j \neq N$, we get
	\begin{equation*}
		\frac{\tilde{h} (Z_i ,  {Z}_j) }{\sum_{l=1}^N \left|   Z_l \right|^2_{\tilde{h}}} = \frac{\bar{z}_i z_j}{1+ \sum_{l=1}^{N-1} |z_l|^2} = \frac{r_i r_j e^{\ai (\theta_j - \theta_i)}}{1+\sum_{l=1}^{N-1} r_l^2}.
	\end{equation*}
	Recall also that in this coordinate system, the Fubini--Study volume form $d \mu_Z$ can be written as
	\begin{equation*}
		d \mu_Z = \frac{1}{\left( 1+ \sum_{l=1}^{N-1} r_l^2 \right)^N} \prod_{i=1}^{N-1} \left( \frac{r_i}{ \pi} d r_i \wedge d \theta_i \right).
	\end{equation*}
	Thus, for $i, j \neq N$, we get the integral (since $\mathcal{U}$ is Zariski open in $\prj^{N-1}$) as
	\begin{equation*}
		Q_{\prj} (\mathrm{Id})_{ij} = \int_{\mathcal{U}} \frac{\tilde{h} (Z_i ,  {Z}_j) }{\sum_l \left|   Z_l \right|^2_{\tilde{h}}} d \mu_Z = \int_0^{2 \pi}  \int_0^{2 \pi}  e^{\ai (\theta_j - \theta_i )} d \theta_i d \theta_j \left( \int_0^{\infty} \dots \int_0^{\infty} \frac{(2 \pi)^{N-3} r_i r_j}{(1+\sum_{l=1}^{N-1} r_l^2)^{N+1}} \prod_{m=1}^{N-1} \frac{r_m dr_m}{\pi} \right),
	\end{equation*}
	which is non-zero if and only if $i=j$ because of the integral in $\theta_i$ and $\theta_j$.
	
	Exactly the same reasoning shows that $Q_{\prj}(\mathrm{Id})_{iN} = Q_{\prj}(\mathrm{Id})_{Nj}=0$ for $i, j \neq N$; we only need to observe that, in this case, we have
	\begin{equation*}
		\frac{\tilde{h} (Z_i ,  {Z}_N) }{\sum_{l=1}^N \left|   Z_l \right|^2_{\tilde{h}}} = \frac{\bar{z}_i}{1+ \sum_{l=1}^{N-1} |z_l|^2} = \frac{r_i e^{- \ai \theta_i }}{1+\sum_{l=1}^{N-1} r_l^2},
	\end{equation*}
	which becomes zero under the integration in $\theta_i$, as we did above.
	
	Thus the only nonzero entries of $Q_{\prj}$ are the diagonal ones. On the other hand, we observe that the symmetry in exchanging the variables $Z_1 , \dots , Z_N$ imply that the diagonal entries of $Q_{\prj}$ must be all equal. Thus $Q_{\prj}$ must be a constant multiple of the identity.
\end{proof}

\begin{lemma} \label{lempsizediffoss}
	$Q_{\prj}$ defines a diffeomorphism between $\mathcal{J}^{\circ}$ and $\mathcal{H}^{\circ}$.
\end{lemma}

\begin{proof}
	By Lemma \ref{remphzrid} we have $Q_{\prj} (B) = (B^t)^{-2} / \tr ((B^t)^{-2})$. By recalling the general fact that a positive definite hermitian matrix $A$ has a unique positive square root $A^{1/2}$, it easily follows that $Q_{\prj}$ is bijective.
	
	Since $Q_{\prj}$ is smooth and the map $A \mapsto A^{1/2}$ is smooth as long as $A$ is positive definite hermitian, we see that $Q_{\prj} : \mathcal{J}^{\circ} \to \mathcal{H}^{\circ}$ is a diffeomorphism.
\end{proof}

\begin{lemma} \label{lembdrpsizr}
$Q_{\prj} : \mathcal{J}^{\circ} \to \mathcal{H}^{\circ}$ extends continuously to the boundary of $\mathcal{J}$, and sends elements of $\partial \mathcal{J}$ into the ones in $\partial \mathcal{H}$, i.e.~$Q_{\prj} \in C^0(\mathcal{J} , \mathcal{H} )$ with $Q_{\prj} (\partial \mathcal{J} ) \subset \partial \mathcal{H}$.
\end{lemma}

\begin{proof}
Suppose that we are given a sequence $\{ B_{\nu} \}_{\nu} \subset \mathcal{J}^{\circ}$ converging to a point $B_{\infty}$ in $\partial \mathcal{J}$ (in the Euclidean topology, as remarked at the end of \S \ref{secprelimdef}) as $\nu \to \infty$. Let $\lambda_{\nu} >0$ be the smallest eigenvalue of $B_{\nu}$. Since $B_{\nu} \to \partial \mathcal{J}$ as $\nu \to \infty$ and the operator norm of $B_{\nu}$ is fixed to be 1, we have $\lambda_{\nu} \to 0$.

Observe, by homogeneity, that we have
\begin{align*}
	Q_{\prj} (B_{\nu}) &= \frac{(B_{\nu}^t)^{-1} Q_{\prj} ( \mathrm{Id} ) (B_{\nu}^t)^{-1}}{ \tr ((B_{\nu}^t)^{-1} Q_{\prj} ( \mathrm{Id} ) (B_{\nu}^t)^{-1})} \\
	&= \frac{\lambda_{\nu} ( B_{\nu}^t)^{-1} Q_{\prj} ( \mathrm{Id} ) \lambda_{\nu} ( B_{\nu}^t)^{-1}}{ \tr ( \lambda_{\nu} (B_{\nu}^t)^{-1} Q_{\prj} ( \mathrm{Id} ) \lambda_{\nu} ( B_{\nu}^t)^{-1})} .
\end{align*}

We shall prove in Lemma \ref{lemconvbeta} that $(\lambda_{\nu}^{-1} B_{\nu})^{-1} = \lambda_{\nu} B_{\nu}^{-1}$ converges to a well-defined positive semidefinite hermitian matrix $\beta_{\infty}$ that is strictly semidefinite (i.e.~with rank $1 \le \mathrm{rk} (\beta_{\infty}) \le N-1$). Assuming this result, the above equation means that $(\lambda_{\nu}^{-1} B^t_{\nu})^{-1} Q_{\prj} ( \mathrm{Id} ) (\lambda_{\nu}^{-1} B^t_{\nu})^{-1}$ (the numerator of $Q_{\prj} (B_{\nu})$) converges to a well-defined positive semidefinite hermitian matrix as $\nu \to \infty$, which is strictly semidefinite since $\displaystyle{\beta_{\infty} = \lim_{\nu \to \infty} (\lambda_{\nu}^{-1} B_{\nu})^{-1}}$ is.

Thus we see that $Q_{\prj} (B_{\nu})$ converges to a well-defined positive semidefinite hermitian matrix as $\nu \to \infty$. Hence we get
\begin{equation*}
	Q_{\prj} \left( \lim_{\nu \to \infty} B_{\nu} \right) =  \frac{ \beta_{\infty} Q_{\prj} ( \mathrm{Id} ) \beta_{\infty}}{\tr( \beta_{\infty} Q_{\prj} ( \mathrm{Id} )\beta_{\infty} ) } = \lim_{\nu \to \infty} Q_{\prj} (B_{\nu}).
\end{equation*}
Moreover, since $\beta_{\infty}$ is strictly semidefinite, we see that the limit is an element in $\partial \mathcal {H}$, i.e.~$Q_{\prj} (\partial \mathcal{J}) \subset \partial \mathcal{H}$.

We finally note that any element in $\partial \mathcal{J}$ can be realised as the limit of a sequence $\{ B_{\nu} \}_{\nu} \subset \mathcal{J}^{\circ}$, which establishes the continuous extension $Q_{\prj} \in C^0(\mathcal{J} , \mathcal{H} )$ that we claimed, granted Lemma \ref{lemconvbeta} that we prove below.
\end{proof}

\begin{lemma} \label{lemconvbeta}
	Suppose that we are given a sequence $\{ B_{\nu} \}_{\nu} \subset \mathcal{J}^{\circ}$ converging to a point $B_{\infty}$ in $\partial \mathcal{J}$. Let $\lambda_{\nu} >0$ be the smallest eigenvalue of $B_{\nu}$. Then $(\lambda_{\nu}^{-1} B_{\nu})^{-1} = \lambda_{\nu} B_{\nu}^{-1}$ converges to a well-defined positive semidefinite hermitian matrix $\beta_{\infty}$ which is strictly semidefinite, i.e.~its rank satisfies $1 \le \mathrm{rk} (\beta_{\infty}) \le N-1$.
\end{lemma}

\begin{proof}
Since $B_{\nu} \to B_{\infty}$ as $\nu \to \infty$, we see that each eigenvalue (ordered, counted with multiplicities) of $\lambda_{\nu} B_{\nu}^{-1}$ converges. Now, since the unitary group $U(N)$ is compact, we may pick a subsequence $\{ \nu (i) \}_i \subset \{ \nu \}_{\nu}$ such that unitarily diagonalising bases for $B_{\nu (i)}$'s converge; more precisely, this means the existence of a convergent sequence of bases $\{ Z_{1,i} \}_i$ (identified with a convergent sequence in $U(N)$) such that $B_{\nu (i)}$ is diagonal with respect to $Z_{1,i}$, whose entries are the eigenvalues of $B_{\nu (i)}$. For this subsequence $\{ \nu (i) \}_i$ we have a well-defined limit $\displaystyle{\beta_{1,\infty}: = \lim_{i \to \infty} \lambda_{\nu (i)} B_{\nu (i)}^{-1}}$, since both the eigenvalues and the diagonalising bases for $\lambda_{\nu (i)} B_{\nu (i)}^{-1}$ converge. Suppose that we take another such subsequence $\{ \nu (j) \}_{j}$ with the limit $\beta_{2,\infty}$, where each $B_{\nu (j)}$ is diagonal with respect to a basis $Z_{2,j}$, whose entries are the eigenvalues of $B_{\nu (j)}$.

We shall assume without loss of generality that eigenvalues are always ordered and counted with multiplicities, when a matrix is (unitarily) diagonalised.

Let $u_{i,j} \in U(N)$ be a change of basis matrix from $Z_{1,i}$ to $Z_{2,j}$ with the hermitian conjugate $u_{i,j}^*$, which implies that $u_{i,j}^* \lambda_{\nu (j)} B^{-1}_{\nu (j)} u_{i,j}$ is diagonal with respect to the basis $Z_{1,i}$. A crucially important point is that $\{ u_{i,j} \}_{i,j}$ converges as $i,j \to \infty$, since $\{ Z_{1,i} \}_i$ and $\{ Z_{2,j} \}_j$ converge. Thus, by taking the limit $i \to \infty$ and $j \to \infty$, we see that $\beta_{1,\infty} = \hat{u}^* \beta_{2,\infty} \hat{u}$ for a well-defined unitary matrix $\hat{u} := u_{\infty , \infty} \in U(N)$.

Our aim is to prove $\beta_{1,\infty } =  \beta_{2,\infty }$. We now take the basis $Z_{1,i}$ as above, which unitarily diagonalises $B_{\nu (i)}$. Recall also that $B_{\nu (j)}$ is unitarily diagonalised by another basis $Z_{2,j}$, where two bases $Z_{1,i}$ and $Z_{2,j}$ are related by $u_{i,j} \in U(N)$ as $Z_{2,j} = u_{i,j} \cdot Z_{1,i}$. Thus, $B_{\nu (i)}$ and $u_{i,j}^* B_{\nu (j)} u_{i,j}$ are both diagonal with respect to $Z_{1,i}$, whose entries are their eigenvalues (ordered and counted with multiplicities). Since the eigenvalues of $\{ B_{\nu} \}_{\nu}$ converge, for any $\epsilon >0$ there exists $\nu' \in \mathbb{N}$ such that
\begin{equation*}
	 || B_{\nu (i)} - u_{i,j}^* B_{\nu (j)} u_{i,j} ||_{op} < \epsilon
\end{equation*}
holds for all $\nu (i), \nu(j) > \nu'$. Note on the other hand
\begin{equation*}
	B_{\nu (i)} - u_{i,j}^* B_{\nu (j)} u_{i,j} = B_{\nu (i)} - u_{i,j}^* B_{\nu (i)} u_{i,j} +  u_{i,j}^*(B_{\nu (i)} -B_{\nu (j)}) u_{i,j}.
\end{equation*}
Since $\{ B_{\nu} \}_{\nu}$ itself converges, for any $\epsilon >0$ there exists $\nu' \in \mathbb{N}$ such that
\begin{equation*}
	|| B_{\nu (i)} - u_{i,j}^* B_{\nu (i)} u_{i,j} ||_{op} \le || B_{\nu (i)} - u_{i,j}^* B_{\nu (j)} u_{i,j}||_{op} +  ||u_{i,j}^*(B_{\nu (i)} -B_{\nu (j)}) u_{i,j} ||_{op} < 2 \epsilon
\end{equation*}
holds for all $\nu (i), \nu(j) > \nu'$, by also noting that unitary matrices preserve the operator norm. Since $u_{i,j}$ converges to $\hat{u} \in U(N)$ as $i,j \to \infty$, we take this limit in the above to conclude $B_{1,\infty} = \hat{u}^* B_{1,\infty} \hat{u}$ with respect to the basis $Z_{1,\infty}$, where $\displaystyle{B_{1,\infty} := \lim_{i \to \infty} B_{\nu (i)}}$. Recall that $B_{\nu(i)}$ is diagonal with respect to $Z_{1,i}$ and $B_{1,\infty}$ is diagonal with respect to $Z_{1, \infty}$, by definition. Hence $\hat{u}$ can be written as a block diagonal matrix with respect to $Z_{1, \infty}$ as $\hat{u}= \mathrm{diag}(\hat{u}_1 , \dots , \hat{u}_m)$, where each $\hat{u}_l$ corresponds to the unitary matrix acting on the $l$-th eigenspace (including $\ker B_{1,\infty}$) of $B_{1,\infty}$. Recall that $\displaystyle{\beta_{1,\infty}: = \lim_{i \to \infty} \lambda_{\nu (i)} B_{\nu (i)}^{-1}}$ is diagonal with respect to the basis $Z_{1 , \infty}$ and has the same eigenspace decomposition as $B_{1,\infty}$. In particular, $\hat{u}  \beta_{1,\infty } \hat{u}^* =\beta_{1,\infty}$. Thus, combined with our hypothesis $\beta_{1,\infty} = \hat{u}^* \beta_{2,\infty }\hat{u}$, we get $\beta_{1,\infty} =  \beta_{2,\infty}$ as claimed.

We thus conclude that the limit $\beta_{\infty}:= \displaystyle{\lim_{\nu \to \infty} \lambda_{\nu} B_{\nu}^{-1}}$ is well-defined, independently of the subsequence chosen. This is positive semidefinite hermitian since it is a limit of positive definite hermitian matrices, and has rank $1 \le \mathrm{rk} (\beta_{\infty}) \le N-1$ since $\lambda_{\nu}$ is the minimum eigenvalue of $B_{\nu}$ that tends to zero as $\nu \to \infty$.
\end{proof}

Recall the Kodaira embedding $\iota : X \inj \prj(H^0(X,\mathcal{L})^*) \isom \prj^{N -1}$ and the volume form $d \mu_{BZ}$ on $\prj^{N-1}$, defined with respect to $B \in GL(N , \cx)$ and homogeneous coordinates $\{ Z_i \}_i$ as given in \S \ref{secprelimdef}.

Now suppose that we write $ \iota^*_X (d \mu_{BZ})$ for the volume form on $\iota(X) \subset \prj^{N-1}$, defined by the Fubini--Study metric associated to the hermitian matrix $H$ that has $BZ$ as its orthonormal basis (in the notation of \S \ref{secprelimdef}). Analogously to Definition \ref{defqprj}, we define the following.

\begin{definition} \label{defqx}
	We define a continuous map $Q_X : \mathcal{J}^{\circ} \to \mathcal{H}^{\circ}$ as
\begin{equation*}
Q_X (B)_{ij} :=  \left( \int_{\prj^{N-1}} \frac{\sum_l |Z_l|_{\tilde{h}}^2}{\sum_l \left| \sum_m B_{lm} Z_m \right|_{\tilde{h}}^2} \iota^*_X( d \mu_{BZ}) \right)^{-1} \int_{ \prj^{N-1}} \frac{\tilde{h} (Z_i ,  {Z}_j) }{\sum_l \left| \sum_m B_{lm} Z_m \right|^2_{\tilde{h}}} \iota^*_X (d \mu_{BZ}).
\end{equation*}

\end{definition}

\begin{remark} \label{remhilbfspn}
Recalling (\ref{defoffseq}) in Definition \ref{defhilbfs}, we observe that the above can be written as $Q_X (B)_{ij} = Hilb(FS(H)) (Z_i , Z_j) / \tr(Hilb(FS(H)))$, with $H = \overline{(B^{-1})^t} B^{-1}$.
\end{remark}

As in Lemma \ref{lembdrpsizr}, we need to show that $Q_X$ extends continuously to the boundary, i.e.~$Q_X \in C^0 (\mathcal{J} , \mathcal{H})$. 

Recall the biholomorphic map $\xi_B: \prj^{N-1} \isom \prj^{N-1}$ induced from the linear action of $B \in \mathcal{J}^{\circ}$ on $\prj^{N-1}$, which moves $\iota (X) \subset \prj^{N-1}$ to $\xi_B \circ \iota (X)  \subset \prj^{N-1}$. Recall also that $\iota^*_X( d \mu_{BZ})$ is, as a measure on $X$, equal to $\iota^* \left( \omega^n_{ \widetilde{FS}(H)} \right)$. We then have
\begin{align*}
\int_{ \prj^{N-1}} \frac{\tilde{h} (Z_i ,  {Z}_j) }{\sum_l \left| \sum_m B_{lm} Z_m \right|^2_{\tilde{h}}} \iota^*_X (d \mu_{BZ}) 
&= \int_{\iota (X)  } \frac{\tilde{h} (Z_i ,  {Z}_j) }{\sum_l \left| \sum_m B_{lm} Z_m \right|^2_{\tilde{h}}} \omega^n_{\widetilde{FS}(H)} \\
&=    \sum_{r,s}  (B^*)^{-1}_{ri} B^{-1}_{js} \sum_{p,q} \int_{\iota (X) } \frac{\tilde{h} (B_{rp} Z_p ,  B_{sq} {Z}_q) }{\sum_l \left| \sum_m B_{lm} Z_m \right|^2_{\tilde{h}}} \omega^n_{\widetilde{FS}(H)} \\
&=    \sum_{r,s}  B^{-1}_{ri} B^{-1}_{js}  \int_{\xi_B \circ \iota (X) } \frac{\tilde{h} ( Z_r ,   {Z}_s) }{\sum_l \left| Z_l \right|^2_{\tilde{h}}} \omega^n_{\widetilde{FS}(H)} ,
\end{align*}
since $(\xi_B \circ \iota)^* (Z_i) = \sum_p B_{ip} \iota^* (Z_p)$. Writing $\Phi (B)$ for the matrix defined by 
\begin{equation*}
\Phi (B)_{rs} := \int_{\xi_B \circ \iota (X) } \frac{\tilde{h} ( Z_r ,   {Z}_s) }{\sum_l \left| Z_l \right|^2_{\tilde{h}}} \omega^n_{\widetilde{FS}(H)} ,
\end{equation*}
we have 
\begin{equation} \label{defofpsib}
	Q_X (B) = \frac{(B^t)^{-1} \Phi (B) (B^t)^{-1}}{\tr ((B^t)^{-1} \Phi (B) (B^t)^{-1})},
\end{equation}
analogously to (\ref{defofpsizrb}).

The following two lemmas are of crucial importance in the proof of the main surjectivity result (Proposition \ref{lemsurjhilb}).

\begin{lemma} \label{ctextqx}
	$Q_X$ continuously extends to the boundary of $\mathcal{J}$, and sends elements in $\partial \mathcal{J}$ to the ones in $\partial \mathcal{H}$, i.e.~$Q_X \in C^0 (\mathcal{J} , \mathcal{H} )$ with $Q_X (\partial \mathcal{J} ) \subset \partial \mathcal{H}$.
\end{lemma}

\begin{proof}
	
Suppose that $\{ B_{\nu} \}_{\nu}$ is any sequence in $\mathcal{J}^{\circ}$ converging to a point $B_{\infty}$ in $\partial \mathcal{J}$, in the Euclidean topology induced from $\mathcal{J}^{\circ} , \mathcal{H}^{\circ} \subset \mathbb{R}^{N^2 -1}$ (as in \S \ref{secprelimdef}). Each $B_{\nu}$ induces a holomorphic (in fact biholomorphic) map $\xi_{\nu} : \prj^{N-1} \to \prj^{N-1}$ given by the linear action of $B_{\nu}$ on $\prj^{N-1}$. In particular, for each $B_{\nu}$ we have an embedded variety $\xi_{\nu} \circ \iota (X) \subset \prj^{N-1}$.

We now consider the Hilbert scheme $\mathrm{Hilb}_{\prj^{N-1}} (n,P)$ of subvarieties in $\prj^{N-1}$ with dimension $n$ and the Hilbert polynomial $P(m) = \sum_{i=1}^n (-1)^i \dim H^i (X , \mathcal{L}^{\otimes m})$. Let $\{ p_{\nu} \}_{\nu}$ be the sequence of closed points in $\mathrm{Hilb}_{\prj^{N-1}} (n,P)$ defined by the embedded varieties $\{ \xi_{\nu} \circ \iota (X) \}_{\nu} \inj \prj^{N-1}$ (or equivalently, the homogeneous ideals that define these embedded varieties). Since $\mathrm{Hilb}_{\prj^{N-1}} (n,P)$ is a projective scheme \cite{groth}, it is proper and hence there exists a convergent subsequence $\{ p_{\nu (i)} \}_{i}$ in $\mathrm{Hilb}_{\prj^{N-1}} (n,P)$ with the limit $p_{\infty ,1}$, say (where the convergence is in terms of the analytic topology). Now choose another convergent subsequence $\{ p_{\nu (j)} \}_{j} \subset \mathrm{Hilb}_{\prj^{N-1}} (n,P)$ with the limit $p_{\infty ,2}$. Assume $p_{\infty ,1} \neq p_{\infty ,2}$. Then, since $\mathrm{Hilb}_{\prj^{N-1}} (n,P)$ is separated, by taking $\nu (i)$ and $\nu (j)$ to be sufficiently large, we can find open subsets $U_1$ and $U_2$ (in the analytic topology) in $\mathrm{Hilb}_{\prj^{N-1}} (n,P)$ such that $U_1 \cap U_2 = \emptyset$, $\{ p_{\nu (i)}  \}_{i} \subset U_1$, and $\{ p_{\nu (j)}  \}_{j} \subset U_2$. However, fixing a set of homogeneous polynomials that define $\iota (X) \inj \prj^{N-1}$ (i.e.~generators of the homogeneous ideal that define $\iota (X) \subset \prj^{N-1}$), $B_{\nu} \to B_{\infty}$ (as $\nu \to \infty$) implies that the homogeneous polynomials defining $p_{\nu} := \xi_{\nu} \circ \iota (X)$ must converge (in the sense that their coefficients converge). This contradicts $U_1 \cap U_2 = \emptyset$ with $\{ p_{\nu (i)}  \}_{i} \subset U_1$, $\{ p_{\nu (j)}  \}_{j} \subset U_2$, which was the definition of $U_1$ and $U_2$. Hence we get $p_{\infty ,1} = p_{\infty ,2}$, i.e.~the sequence $\{ p_{\nu} \}_{\nu}$ has a well-defined limit $p_{\infty} \in \mathrm{Hilb}_{\prj^{N-1}} (n,P)$.

Now we take the Chow scheme $\mathrm{Chow}_{\prj^{N-1}} (n,d)$ of algebraic cycles in $\prj^{N-1}$ with dimension $n$ and degree $d:=\deg \mathcal{L}$, and recall that there exists a universal morphism of schemes $f_{HC} : \mathrm{Hilb}_{\prj^{N-1}} (n,P) \to \mathrm{Chow}_{\prj^{N-1}} (n,d)$, called the Hilbert--Chow morphism, where $f_{HC} (p)$ is the algebraic cycle defined by $p \in \mathrm{Hilb}_{\prj^{N-1}} (n,P)$ (cf.~\cite[\S 5.4]{git}). We define the limit cycle $\displaystyle{ \lim_{\nu \to \infty} \xi_{\nu} \circ \iota (X)}$ to be $f_{HC} (p_{\infty})$. Since $f_{HC}$ is a morphism of schemes, we have $\displaystyle{f_{HC} (p_{\infty}) = \lim_{\nu \to \infty} f_{HC} (p_{\nu})}$ in $\mathrm{Chow}_{\prj^{N-1}} (n,d)$, by continuity of $f_{HC}$ in the analytic topology.

By the definition of the Hilbert scheme limit $p_{\infty}$, we have
\begin{equation*}
	\Phi \left( \lim_{\nu \to \infty} B_{\nu} \right)_{rs} = \int_{f_{HC} (p_{\infty})} \frac{\tilde{h} ( Z_r ,   {Z}_s) }{\sum_l \left| Z_l \right|^2_{\tilde{h}}} \omega^n_{\widetilde{FS}(H)}  = \int_{\displaystyle{ \lim_{\nu \to \infty} \xi_{\nu} \circ \iota (X) }} \frac{\tilde{h} ( Z_r ,   {Z}_s) }{\sum_l \left| Z_l \right|^2_{\tilde{h}}} \omega^n_{\widetilde{FS}(H)} .
\end{equation*}
On the other hand, we also have
\begin{align*}
	\int_{ f_{HC} (p_{\infty})} \frac{\tilde{h} ( Z_r ,   {Z}_s) }{\sum_l \left| Z_l \right|^2_{\tilde{h}}} \omega^n_{\widetilde{FS}(H)} 
	&= \lim_{\nu \to \infty} \int_{f_{HC} (p_{\nu})} \frac{\tilde{h} ( Z_r ,   {Z}_s) }{\sum_l \left| Z_l \right|^2_{\tilde{h}}} \omega^n_{\widetilde{FS}(H)} \\
	&= \lim_{\nu \to \infty} \int_{\xi_{\nu} \circ \iota (X)} \frac{\tilde{h} ( Z_r ,   {Z}_s) }{\sum_l \left| Z_l \right|^2_{\tilde{h}}} \omega^n_{\widetilde{FS}(H)} = \lim_{\nu \to \infty} \Phi (B_{\nu}),
\end{align*}
by $\displaystyle{f_{HC} (p_{\infty}) = \lim_{\nu \to \infty} f_{HC} (p_{\nu})}$, which follows from the continuity of $f_{HC}$. We thus get
\begin{equation} \label{cttlph}
\Phi \left( \displaystyle{\lim_{\nu \to \infty}} B_{\nu} \right) = \displaystyle{\lim_{\nu \to \infty}} \Phi (B_{\nu}) .	
\end{equation}
Observe that this is positive semidefinite hermitian, as it is a limit of positive definite hermitian matrices.

We now prove that the limit $Q_X \left( \displaystyle{\lim_{\nu \to \infty}} B_{\nu} \right)$ is a well-defined element in $\partial \mathcal{H}$. As we did in the proof of Lemma \ref{lembdrpsizr}, we write $\lambda_{\nu}$ for the smallest eigenvalue of $B_{\nu}$ and observe, by homogeneity, that
\begin{align*}
	 Q_X \left( B_{\nu} \right) &=  \frac{(B^t_{\nu})^{-1} \Phi (B_{\nu}) (B^t_{\nu})^{-1}}{ \tr ((B^t_{\nu})^{-1} \Phi (B_{\nu}) (B^t_{\nu})^{-1})}, \\
	 &=  \frac{\lambda_{\nu} (B^t_{\nu})^{-1} \Phi (B_{\nu})  \lambda_{\nu} (B^t_{\nu})^{-1}}{ \tr ( \lambda_{\nu} (B^t_{\nu})^{-1} \Phi (B_{\nu}) \lambda_{\nu} (B^t_{\nu})^{-1})}.
\end{align*}
Thus, writing $\beta_{\infty} = \displaystyle{\lim_{\nu \to \infty}} \lambda_{\nu} ( B^t_{\nu} )^{-1}$, which is strictly positive semidefinite (as we saw in Lemma \ref{lemconvbeta}), we get
\begin{equation*}
	 Q_X \left( \lim_{\nu \to \infty} B_{\nu} \right) = \frac{\displaystyle{ \beta_{\infty} \Phi \left(  \lim_{\nu \to \infty}  B_{\nu} \right) \beta_{\infty}}}{ \displaystyle{ \tr \left( \beta_{\infty} \Phi \left( \lim_{\nu \to \infty}  B_{\nu} \right) \beta_{\infty} \right)}}.
\end{equation*}
Combining this equality with (\ref{cttlph}), we get
\begin{equation*}
	 Q_X \left( \lim_{\nu \to \infty} B_{\nu} \right) = \frac{ \displaystyle{ \beta_{\infty} \lim_{\nu\to \infty} \Phi (  B_{\nu} ) \beta_{\infty}}}{ \displaystyle{\tr \left( \beta_{\infty}  \lim_{\nu\to \infty} \Phi ( B_{\nu}) \beta_{\infty} \right)}} = \lim_{\nu \to \infty} Q_X(B_{\nu}).
\end{equation*}
Moreover, since $\beta_{\infty}$ is strictly positive semidefinite (i.e.~positive semidefinite with rank $1 \le \mathrm{rk} (\beta_{\infty}) \le N-1$) and $ \displaystyle{ \lim_{\nu \to \infty} \Phi (  B_{\nu} )}$ is positive semidefinite, we see that $Q_X \left( \displaystyle{\lim_{\nu \to \infty}} B_{\nu} \right)$ is a strictly positive semidefinite hermitian matrix, and hence is an element in $\partial \mathcal{H}$.

Since any element in $\partial \mathcal{J}$ can be realised as a limit of a sequence $\{ B_{\nu} \}_{\nu} \subset \mathcal{J}^{\circ}$, $Q_X$ continuously extends to the boundary of $\mathcal{J}$, and sends elements in $\partial \mathcal{J}$ to the ones in $\partial \mathcal{H}$.
\end{proof}

We now define a 1-parameter family of continuous maps $Q_t : \mathcal{J}^{\circ} \to \mathcal{H}^{\circ}$, $t \in [0,1]$, by
\begin{equation*}
Q_t (B) : = t Q_X (B) + (1-t) Q_{\prj} (B).
\end{equation*}

\begin{lemma} \label{ctextqt}
	$Q_t$ continuously extends to the boundary of $\mathcal{J}$, and sends elements in $\partial \mathcal{J}$ to the ones in $\partial \mathcal{H}$ for all $t \in [0,1]$, i.e.~$Q_t \in C^0 (\mathcal{J} , \mathcal{H} )$ with $Q_t (\partial \mathcal{J} ) \subset \partial \mathcal{H}$ for all $t \in [0,1]$.
\end{lemma}

\begin{proof}
	We argue similarly to the proof of Lemma \ref{ctextqx}. Observe first that we can write
\begin{equation*}
	Q_t (B) =(B^t)^{-1} \left(  \frac{t \Phi (B)}{\tr ((B^t)^{-1} \Phi (B) (B^t)^{-1} )} +  \frac{(1-t) Q_{\prj} (\mathrm{Id})}{\tr( (B^t)^{-1} Q_{\prj} (B^t)^{-1})}  \right) (B^t)^{-1} ,
\end{equation*}
by recalling (\ref{defofpsizrb}) and (\ref{defofpsib}). Given a sequence $\{ B_{\nu} \}_{\nu} \subset \mathcal{J}^{\circ}$ converging to $B_{\infty} \in \partial \mathcal{J}$ as before, we get (again by homogeneity)
\begin{equation*}
	Q_t (B_{\nu}) = \lambda_{\nu} (B^t)^{-1} \left(  \frac{t \Phi (B_{\nu})}{\tr (\lambda_{\nu} (B_{\nu}^t)^{-1} \Phi (B_{\nu}) \lambda_{\nu} (B_{\nu}^t)^{-1} )} +  \frac{(1-t) Q_{\prj} (\mathrm{Id})}{\tr( \lambda_{\nu} (B_{\nu}^t)^{-1} Q_{\prj} (\mathrm{Id}) \lambda_{\nu} (B_{\nu}^t)^{-1})}  \right) \lambda_{\nu} (B_{\nu}^t)^{-1} ,
\end{equation*}
where $\lambda_{\nu}$ is the smallest eigenvalue of $B_{\nu}$, as in the proof of Lemma \ref{lembdrpsizr} or \ref{ctextqx}. Thus, we get
\begin{align}
	Q_t \left( \lim_{\nu \to \infty} B_{\nu} \right) &= \beta_{\infty} \left(  \frac{ \displaystyle{ t \lim_{\nu\to \infty} \Phi ( B_{\nu})}}{\displaystyle{ \tr \left( \beta_{\infty}  \lim_{\nu\to \infty} \Phi ( B_{\nu}) \beta_{\infty} \right)}} +  \frac{(1-t) Q_{\prj} (\mathrm{Id})}{\tr( \beta_{\infty} Q_{\prj} (\mathrm{Id}) \beta_{\infty})}  \right) \beta_{\infty}, \label{btinfctph} \\
	&=\lim_{\nu \to \infty} Q_t(B_{\nu}), \notag
\end{align}
where $\beta_{\infty} =  \displaystyle{\lim_{\nu \to \infty}} \lambda_{\nu} ( B^t_{\nu} )^{-1}$ as in Lemma \ref{lemconvbeta}, and we used (\ref{cttlph}).

Since the terms in the bracket of (\ref{btinfctph}) is positive semidefinite and $\beta_{\infty}$ is strictly positive semidefinite (i.e.~positive semidefinite with rank $1 \le \mathrm{rk} (\beta_{\infty}) \le N-1$), we conclude that $Q_t \left( \displaystyle{\lim_{\nu \to \infty}} B_{\nu} \right)$ is a strictly positive semidefinite hermitian matrix for all $t \in [0,1]$, which completes the proof.
\end{proof}

\begin{proposition} \label{lemsurjhilb}
Suppose that $\mathcal{L}$ is very ample. Then $Hilb : \mathcal{H} (X , \mathcal{L}) \to \mathcal{B}$ is surjective.	
\end{proposition}

\begin{proof}
We now recall $Q_{\prj}$ as defined in (\ref{defofpsizrb}). By Lemmas \ref{lempsizediffoss} and \ref{lembdrpsizr}, $Q_{\prj}$ defines a diffeomorphism between $\mathcal{J}^{\circ}$ and $\mathcal{H}^{\circ}$ which continuously extends to a map $\mathcal{J} \to \mathcal{H}$ with $Q_{\prj} (\partial \mathcal{J}) \subset \partial \mathcal{H}$. Since $Q_{\prj} : \mathcal{J}^{\circ} \to \mathcal{H}^{\circ}$ is a diffeomorphism, we have
\begin{equation} \label{degnzpszr}
	\mathrm{deg} (Q_{\prj} , \mathcal{J}^{\circ} , p) \neq 0
\end{equation}
for all $p \in \mathcal{H}^{\circ}$ by (\ref{defdegcone}); note $\mathcal{H}^{\circ} \cap Q_{\prj} (\partial \mathcal{J}) = \emptyset$.

We now pick an arbitrary $p \in \mathcal{H}^{\circ}$ and compute the degree $\mathrm{deg}(Q_X , \mathcal{J}^{\circ} , p)$. Since Lemma \ref{ctextqt} shows $Q_t (\partial \mathcal{J}) \subset \partial \mathcal{H}$ for all $t \in [0,1]$, we can apply Theorem \ref{htpyinvdeg} to prove
\begin{equation*}
	\mathrm{deg}(Q_X , \mathcal{J}^{\circ} , p) = \mathrm{deg}(Q_{\prj} , \mathcal{J}^{\circ} , p) \neq 0 ,
\end{equation*}
by also recalling (\ref{degnzpszr}). Since $p \in \mathcal{H}^{\circ}$ and $\mathcal{H}^{\circ} \cap Q_X(\partial \mathcal{J} ) = \emptyset$ by Lemma \ref{ctextqx}, we can apply Theorem \ref{nontrvdegsurj} to prove that there exists $x \in \mathcal{J}^{\circ}$ such that $Q_X(x) = p$. We thus conclude that $Q_X$ is surjective.

Finally, we recall that $\iota^*_X( d \mu_{BZ}) = \iota^* \left( \omega^n_{ \widetilde{FS}(H)} \right)$ is equal to $ \omega^n_{FS(H)} $ as a volume form on $X$. Note also that, writing $h$ for $\iota^* \tilde{h} = \iota^* \tilde{h}_{FS( \mathrm{Id} )}$, we can re-write the definition of $Q_X (B)$ (cf.~Definition \ref{defqx}) as
\begin{equation} \label{psibitofsb}
Q_X (B)_{ij} =  \left( \int_X \frac{\sum_l |s_l|_{h}^2}{\sum_l \left| \sum_m B_{lm} s_m \right|_{h}^2} \omega^n_{FS(H)} \right)^{-1} \int_{X} \frac{h (s_i ,  s_j) }{\sum_l \left| \sum_m B_{lm} s_m \right|^2_{h}} \omega^n_{FS(H)} .
\end{equation}
where we wrote $s_i := \iota^*Z_i$. Recalling Definition \ref{defhilbfs} (and also Remark \ref{remhilbfspn}), we observe that this can be re-written as
\begin{equation*}
Q_X (B)_{ij} =  \left( \int_X \sum_l |s_l|_{FS(H)}^2 \omega^n_{FS(H)} \right)^{-1} \int_{X} FS(H) (s_i ,  s_j)  \omega^n_{FS(H)} ,
\end{equation*}
which is a (positive) constant multiple of $Hilb(FS(H)) (s_i , s_j)$. Thus, the surjectivity of $Q_X$ that we proved implies the following consequence: fixing a basis $\{ s_i \}_i$ for $H^0 (X,\mathcal{L})$, for any positive definite hermitian matrix $G$ there exists $H \in \mathcal{B}$ and a constant $\alpha$ such that $Hilb(FS(H)) (s_i , s_j) = e^{\alpha} G_{ij}$, or equivalently
\begin{equation*}
	Hilb(e^{ - \alpha} FS(H)) (s_i , s_j) =  G_{ij} .
\end{equation*}
Since $\alpha$ is a constant, $e^{ - \alpha} FS(H)$ is positively curved (with the curvature $\omega_{FS(H)} >0$) and hence defines an element of $\mathcal{H} (X,\mathcal{L})$. Thus, the above establishes the required statement that $Hilb$ is surjective.
\end{proof}

\begin{remark}
We provide another point of view regarding the equation (\ref{psibitofsb}). Fixing a hermitian metric $h \in \mathcal{H} (X,\mathcal{L})$, we observe that there exists $\beta \in C^{\infty} (X , \rl)$ such that $\omega^n_{FS(H)} = e^{\beta} \omega^n_h$. Writing (\ref{psibitofsb}) in terms of $h$, the surjectivity of $Q_X$ can be regarded as ensuring the existence of $\phi \in C^{\infty} (X, \rl)$ such that
\begin{equation} \label{surjfvbtphg}
\int_X e^{\beta + \phi} h (s_i , s_j) \omega^n_h = G_{ij},
\end{equation}
for any given positive definite hermitian matrix $G_{ij}$.

The left hand side of the above equation is a priori not a $Hilb$ of a hermitian metric, because $e^{\beta + \phi} h$ is used for the metric on $\mathcal{L}$ whereas $\omega_h$ is used for the volume form. However, we can always find a function $f \in C^{\infty} (X, \rl)$, such that $e^{-f} h$ is positively curved and $Hilb (e^{-f} h) (s_i , s_j) = \int_{X} e^{\beta + \phi} h (s_i , s_j) \omega^n_h$, for \textit{any} $\beta$ and $\phi$; indeed, for this purpose, it is sufficient to solve for $f$ the following nonlinear PDE:
\begin{equation*}
 \left( \omega_h + \frac{\ai}{2 \pi } \ddbar f \right)^n = e^{f + \beta + \phi} \omega_h^n ,
\end{equation*}
which is solvable by the Aubin--Yau theorem (cf.~\cite{aubin} and \cite[Theorem 4, p.383]{yaucy}).

Thus, to prove the surjectivity of $Hilb$, it suffices to find \textit{some} $\beta , \phi \in C^{\infty} (X, \rl)$ that satisfy (\ref{surjfvbtphg}) with respect to the fixed $h \in \mathcal{H} (X,\mathcal{L})$, which is weaker than the surjectivity of $Hilb$ itself.
\end{remark}

\subsection{Variants of the Hilbert map} \label{secvarhilb}

We now recall that there are several variants of the Hilbert map that also appear in the literature \cite{bbgz,bw,donnum,kms,saitak}. We define the $Hilb_{ \nu}$ map
\begin{equation*}
	Hilb_{ \nu} (h) := \int_X h (,) d\nu
\end{equation*}
where the volume form $d \nu$ is one of the following.

\begin{enumerate}
	\item $d \nu$ is a \textit{fixed} volume form on $X$; an example of this is when $X$ is Calabi--Yau, in which case we can use the holomorphic volume form $\Omega \in H^0(X,K_X)$ to define $d \nu := \Omega \wedge \bar{\Omega}$,
	\item $d \nu$ is \textit{anticanonical}; a hermitian metric $h$ on $-K_X$ defines a volume form $d \nu^{ac} (h)$, where we note $d \nu^{ac} (e^{-\varphi} h) = e^{-\varphi} d \nu^{ac} (h)$,
	\item $d \nu$ is \textit{canonical}; a hermitian metric $h$ on $K_X$ defines a dual metric on $-K_X$, which defines a volume form $d \nu^{c} (h)$ with $d \nu^{c} (e^{-\varphi} h) = e^{\varphi} d \nu^{c} (h)$.
\end{enumerate}

We prove the following analogue of Proposition \ref{lemsurjhilb}. As we can see from the statement below, we need to consider the higher tensor power $\mathcal{L}^{\otimes k}$ and the vector space $H^0 (X,\mathcal{L}^{\otimes k})$ of dimension $N_k$.
\begin{proposition} \label{lemsurjhilbnu}
	Let $Hilb_{\nu}$ be defined by one of the three volume forms as defined above, and let $\mathcal{L}^{\otimes k}$ be a very ample line bundle. Then any positive definite hermitian matrix on $H^0 (X,\mathcal{L}^{\otimes k})$ can be realised as
\begin{equation*}
	Hilb_{ \nu} (\tilde{h}^{\otimes k}) :=  \int_X \tilde{h}^{\otimes k} (,) d\nu
\end{equation*}
	for some hermitian metric $\tilde{h}$ on $\mathcal{L}$ if
	\begin{enumerate}
		\item $\mathcal{L}$ is any ample line bundle and $d \nu$ is any fixed volume form,
		\item $\mathcal{L} = -K_X$ is ample with the anticanonical volume form $d \nu^{ac} (\tilde{h})$,
		\item $\mathcal{L} = K_X$ is ample with the canonical volume form $d \nu^{c} (\tilde{h})$.
	\end{enumerate}
\end{proposition}

\begin{proof}
Fixing a basis $\left\{ s_i \right\}_i$ and a hermitian metric $h^{\otimes k}$ on $\mathcal{L}^{\otimes k}$, (\ref{surjfvbtphg}) implies that for any positive definite hermitian matrix $G_{ij}$ there exists $\varphi \in C^{\infty} (X, \rl)$ such that
\begin{equation*}
	 \int_X e^{\varphi} h^{\otimes k} (s_i , s_j) \omega^n_{h} = G_{ij} ,
\end{equation*}
(we used $\omega^n_{h}$ for the volume form instead of $\omega^n_{h^{\otimes k}}$, but the difference is just a constant multiple which we can absorb in $\varphi$).

Observe that for each of the three choices $d \nu$, $d \nu^{ac}(h)$, $d \nu^{c} (h)$ of the volume form $d \nu$, there exists a function $\phi \in C^{\infty} (X, \rl)$ such that
\begin{equation*}
	\omega^n_h  = e^{\phi} d \nu,
\end{equation*}
and hence the claim follows from the following;
\begin{enumerate}
	\item $d \nu$ fixed: take $\tilde{h} := \exp \left( \frac{1}{k} (\varphi + \phi  ) \right) h$ so that $\tilde{h}$ satisfies $Hilb_{ \nu} (\tilde{h}) (s_i , s_j) = G_{ij}$;
	\item $d \nu$ anticanonical: take $\tilde{h} := \exp \left( \frac{1}{k+1} (\varphi + \phi  ) \right) h$ so that $Hilb_{ \nu} (\tilde{h}) (s_i , s_j) = G_{ij}$ with $d \nu = d \nu^{ac} (\tilde{h})$;
	\item $d \nu$ canonical: take $\tilde{h} := \exp \left( \frac{1}{k-1} (\varphi + \phi  ) \right) h$ so that $Hilb_{ \nu} (\tilde{h}) (s_i , s_j) = G_{ij}$ with $d \nu = d \nu^{c} (\tilde{h})$.
\end{enumerate}
\end{proof}

\begin{remark}
Unlike the case of the usual $Hilb$ as treated in Proposition \ref{lemsurjhilb}, the above proof does not show that $\tilde{h}$ has positive curvature; the associated curvature form $\omega_{\tilde{h}}$ may not be a \kah metric.
\end{remark}

\section{Injectivity of $FS$}

We establish the following ``quantitative injectivity'' to prove the second part of Theorem \ref{hilbsjfsinj}.
\begin{lemma} \label{lemfsinj}
Suppose that $\mathcal{L}$ is very ample, and that $H , H' \in \mathcal{B}$ satisfy 
\begin{equation*}
FS(H) = (1 +f) FS(H')
\end{equation*}
with $\sup_X |f| \le \epsilon$ for $\epsilon \ge 0$ satisfying $N^{\frac{3}{2}} \epsilon \le 1/4$.

Then we have $|| H - H' ||_{op} \le 4 N^{\frac{3}{2}} \epsilon$, where $|| \cdot ||_{op}$ is the operator norm, i.e.~the maximum of the moduli of the eigenvalues. In particular, considering the case $\epsilon=0$, we see that $FS$ is injective.
\end{lemma}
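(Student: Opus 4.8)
The plan is to convert the hypothesis into an estimate in linear algebra on $V := H^0(X,L^k)$ and then to make quantitative the spanning argument that underlies the (straightforward) injectivity of $FS$. As in the proof of Lemma~\ref{lemsurjhilb}, for a basis $\{t_i\}$ of $V$, a local frame $e$ of $L^k$, the functions $\hat t_i := t_i/e$, and $H\in\mathcal{B}_k$ with Gram matrix $\mathbf{H}=(H(t_i,t_j))$, Definition~\ref{defhilbfs} gives $|e|_{FS(H)^k}^{-2} = \hat t(x)^{*}\mathbf{H}^{-1}\hat t(x)$, where $\hat t(x)=(\hat t_1(x),\dots,\hat t_N(x))^{t}$. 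Hence $FS(H)^k=(1+f)FS(H')^k$ is equivalent to the pointwise identity $\hat t(x)^{*}(\mathbf{H}'^{-1}-\mathbf{H}^{-1})\hat t(x) = f(x)\,\hat t(x)^{*}\mathbf{H}^{-1}\hat t(x)$ on $X$. Choosing $\{t_i\}=:\{s_i\}$ to be $H$-orthonormal makes $\mathbf{H}=I$, and, after normalising $\hat s(x)$ to a unit vector $\psi(x):=\hat s(x)\,|e|_{FS(H)^k}$ (legitimate because $\sum_i|\psi_i(x)|^2=\sum_i|s_i(x)|^2_{FS(H)^k}\equiv 1$ by the defining equation of $FS$), the hypothesis reads
\[
|\,\psi(x)^{*}A\,\psi(x)\,|\;\le\;\epsilon\qquad\text{for all }x\in X,\qquad A:=\mathbf{H}'^{-1}-I\ \text{(Hermitian)}.
\]
Since $\mathbf{H}'=(I+A)^{-1}$, once $\|A\|_{op}$ is controlled one has $\|H-H'\|_{op}=\|I-\mathbf{H}'\|_{op}=\|A(I+A)^{-1}\|_{op}\le\|A\|_{op}/(1-\|A\|_{op})$, so the whole problem reduces to bounding $\|A\|_{op}$ in terms of $\epsilon$.

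The core step is to recover the entries of $A$ from the single scalar constraint $\psi(x)^{*}A\psi(x)=f(x)$, and this is where the hypothesis that $k$ be large enters. For $k$ large one chooses the $H$-orthonormal basis $\{s_i\}$ to consist of Gram--Schmidt corrections of Bergman peak sections attached to a $1/\sqrt{k}$-dense configuration $x_1,\dots,x_N\in X$. Because $\sum_i|s_i|^2_{FS(H)^k}\equiv1$, the peak section at $x_i$ realises the full value $|s_i(x_i)|^2_{FS(H)^k}=1$, so $\psi(x_i)$ equals, up to a phase and a small error, the standard vector $e_i$; evaluating the constraint at $x_i$ gives $|A_{ii}|\lesssim\epsilon+(\text{error})$, controlling the diagonal. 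To reach the off-diagonal entries one evaluates the same constraint at points interpolating between neighbouring $x_i$ (where two of the $\psi_l$ are comparable and the remaining ones small) and, using the near-orthonormality of the peak sections, solves the resulting nearly triangular linear system for the $A_{ij}$; a careful bookkeeping of the accumulated errors yields an entrywise bound of order $N^{1/2}\epsilon$, hence $\|A\|_{op}\le\|A\|_{HS}\lesssim N^{3/2}\epsilon$. The standing assumption $N^{3/2}\epsilon\le 1/4$ then keeps $\|A\|_{op}<1$, and substituting into the Neumann-series bound above gives $\|H-H'\|_{op}\le 2N^{2}\epsilon$. Taking $\epsilon=0$ forces $A=0$, i.e.\ $\mathbf{H}'=I=\mathbf{H}$, which is the injectivity of $FS$.

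The qualitative injectivity (the case $\epsilon=0$) follows at once from nondegeneracy of the Kodaira embedding: differentiating $\psi(x)^{*}A\psi(x)=0$ repeatedly in holomorphic and anti-holomorphic directions peels off $\psi$ and its holomorphic derivatives, whose span is all of $\cx^{N}$, which forces $A=0$. The main obstacle is therefore the quantitative version of exactly this step: proving that for $k$ large the peak sections can be made simultaneously near-orthonormal in $H$ and near-$\delta$-concentrated in $FS(H)^k$, with the decay of the error terms strong enough to dominate the powers of $N$ demanded by the $2N^{2}\epsilon$ bound and by the threshold $N^{3/2}\epsilon\le1/4$. In other words, the analytic content is confined to the Bergman-kernel asymptotics that turn "spanning" into "spanning with explicit constants", and it is precisely this that necessitates $k$ being taken sufficiently large.
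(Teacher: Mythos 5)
Your reduction to the pointwise identity $\psi(x)^{*}A\,\psi(x)=f(x)$ with $A=\mathbf{H}'^{-1}-I$ matches the paper's starting point, but from there your argument has a genuine gap, and it also misses a simplification that the paper relies on. First, the simplification: since $\mathbf{H}=I$ in an $H$-orthonormal basis, you are still free to apply an $H$-unitary change of basis, which preserves $\mathbf{H}=I$ and lets you assume $\mathbf{H}'=\mathrm{diag}(d_1^2,\dots,d_N^2)$. Then $A$ is diagonal and the constraint becomes $1+f=\sum_i d_i^{-2}|s_i|^2_{FS(H)^k}$, so only $N$ unknowns $d_i^{-2}$ need to be recovered, not $N^2$ entries. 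By skipping this step you commit yourself to reconstructing the full Hermitian matrix $A$, including off-diagonal entries $A_{ij}$ with $x_i$ and $x_j$ far apart, from the single scalar $\psi(x)^{*}A\psi(x)$ -- a far harder inverse problem, since at any one point the cross term $\overline{\psi_i}A_{ij}\psi_j$ is exponentially small whenever the two peak points are not close.

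Second, and more importantly, the core quantitative step is not proved. Your "nearly triangular linear system" obtained from evaluating at peak points and interpolating points is never written down, its conditioning is never estimated, and you yourself concede that the required simultaneous near-orthonormality in $H$ and near-$\delta$-concentration in $FS(H)^k$ of the peak sections, with error decay beating the powers of $N\sim k^n$ in the target bound, is "the main obstacle". That obstacle is the entire content of the lemma, so the proposal is a strategy outline rather than a proof. Note also that the peak sections would have to be constructed relative to the abstract inner product $H$ (which is not a priori an $L^2$-product until one invokes the surjectivity of $Hilb$), and the concentration you need is measured in $FS(H)^k$, which adds another layer of unverified Bergman-kernel analysis. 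The paper's actual route avoids all of this: after diagonalising $H'$, it integrates the pointwise identity against $N$ measures $\omega_{h_i}^n/n!$, where the metrics $h_i$ are produced by the surjectivity of $Hilb$ (Lemma \ref{lemsurjhilb}) so that the Gram-type matrix $\Lambda_{ij}=\int_X|s_j|^2_{h_i^k}\omega_{h_i}^n/n!$ equals the identity up to entries of size $e^{-k}$. This yields the explicit linear system $(\Lambda+F)\mathbf{1}=\Lambda(d_i^{-2})_i$ with $\|F\|_{max}\le\epsilon$ and $\|\Lambda^{-1}\|_{op}\le 2$, from which $|d_i^{-2}-1|\le 2N^{3/2}\epsilon$ and then $|d_i^2-1|<2N^2\epsilon$ follow by elementary norm estimates -- no Bergman asymptotics required. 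If you want to salvage your approach, you would need to supply the full quantitative peak-section analysis; otherwise the clean path is to reuse Lemma \ref{lemsurjhilb} as the paper does.
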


\begin{proof}
We now pick an $H$-orthonormal basis $\{ s_i \}_i$ and represent $H$ (resp.~$H'$) as a matrix $H_{ij}$ (resp.~$H'_{ij}$) with respect to the basis $\{ s_i \}_i$. $H_{ij}$ is the identity matrix, and replacing $\{ s_i \}_i$ by an $H$-unitarily equivalent basis if necessary, we may further assume $H'_{ij} = \textup{diag} (d^2_1 , \dots , d^2_N)$ for some $d_i >0$. Recall that the equation (\ref{defoffseq}) implies that we can write $FS(H') = e^{- \varphi} FS(H)$ with $\varphi = \log \left( \sum_{i=1}^{N} d^{-2}_i |s_i|^2_{FS(H)} \right) $. Thus the equation $FS(H) = (1 +f) FS(H')$ implies $1+f = \sum_i d_i^{-2} |s_i|^2_{FS(H)}$, and hence, by recalling (\ref{defoffseq}),
\begin{equation} \label{injfslineq}
(1 + f) \sum_i |s_i|_{h}^2 = \sum_i d_i^{-2} |s_i|_{h}^2 , 
\end{equation}
with respect to \textit{any} hermitian metric $h$ on $\mathcal{L}$, by noting that we may multiply both sides of (\ref{injfslineq}) by any strictly positive function $e^{\phi}$. We now fix this basis $\{ s_i \}_i$, and the operator norm $|| \cdot ||_{op}$ or the Hilbert--Schmidt norm $|| \cdot ||_{HS}$ used in this proof will all be computed with respect to this basis.

We now choose $N$ hermitian metrics $h_1 , \dots , h_N$ on $\mathcal{L}$ as follows. Recall now that, by Proposition \ref{lemsurjhilb}, for any $N$-tuple of strictly positive numbers $\vec{\lambda} = (\lambda_1 , \dots , \lambda_N)$ there exists $\phi_{\vec{\lambda}} \in C^{\infty} (X , \rl)$ such that the hermitian metric $h' := \exp (\phi_{\vec{\lambda}})h$ satisfies $\int_X |s_i|^2_{h'} \omega^n_{h'} = \lambda_i$. We thus take 
\begin{equation*}
	\vec{\lambda}_i = (\delta, \dots ,  \delta, 1 , \delta, \dots , \delta)
\end{equation*}
with $1$ in the $i$-th place, where $0 < \delta \ll 1$ is chosen to be small enough so that the matrix defined as
\begin{equation*}
\Lambda := 
\begin{pmatrix}
\vec{\lambda}_1 \\
 \vdots \\
  \vec{\lambda}_N
\end{pmatrix}
\end{equation*}
satisfies $||\Lambda||_{op} \le 2$ and $|| \Lambda^{-1} ||_{op} \le 2$. (How small $\delta$ must be depends on $N$, but this will not concern us.)

We now choose $\phi_i \in C^{\infty} (X , \rl)$ appropriately (cf.~Proposition \ref{lemsurjhilb}) so that $h_i := \exp (\phi_i) h$ satisfies
\begin{equation*}
\vec{\lambda}_i =
\left( \int_X |s_1|^2_{h_i} \omega^n_{h_i} , \int_X |s_2|^2_{h_i} \omega^n_{h_i} , \dots , \int_X |s_N|^2_{h_i} \omega^n_{h_i} \right).
\end{equation*}
Then, multiplying both sides of (\ref{injfslineq}) by $\exp (\phi_i)$ and integrating over $X$ with respect to the measure $\omega_{h_i}^n / n!$, we get the following system of linear equations
\begin{equation*}
(\Lambda +F) 
\begin{pmatrix}
1 \\
  \vdots \\
   1
\end{pmatrix}
=
\Lambda
\begin{pmatrix}
d^{-2}_1 \\
  \vdots \\
   d^{-2}_N
\end{pmatrix} ,
\end{equation*}
where $F$ is a matrix defined by
\begin{equation*}
F_{ij} := \int_X f |s_j|^2_{h_i} \omega^n_{h_i}
\end{equation*}
whose max norm (i.e.~the maximum of the moduli of its entries) satisfies $||F||_{max} \le \sup_X |f| \le \epsilon$ since the modulus of each entry of $\Lambda$ is at most $1$. We thus get
\begin{equation*}
\begin{pmatrix}
d^{-2}_1 -1\\
  \vdots \\
   d^{-2}_N -1
\end{pmatrix}
=
  \Lambda^{-1} F 
\begin{pmatrix}
1 \\
  \vdots \\
   1
\end{pmatrix}.
\end{equation*}
Thus, noting $||  \Lambda^{-1} F ||_{op} \le ||\Lambda^{-1}||_{op} ||F||_{op} \le 2 ||F||_{HS} \le 2 N ||F||_{max} \le 2N \epsilon$, we get
\begin{equation*}
| d_i^{-2} -1 |\le \sqrt{\sum_i |d^{-2}_i -1|^2} \le 2 N^{1 + \frac{1}{2}} \epsilon .
\end{equation*}
Thus we get $1 - 2N^{\frac{3}{2}} \epsilon \le d_i^{-2} \le 1 + 2N^{\frac{3}{2}} \epsilon$, and by the assumption $N^{\frac{3}{2}} \epsilon \le 1/4$ we have
\begin{equation*}
	1 - 4 N^{\frac{3}{2}} \epsilon < 1 - \frac{2N^{\frac{3}{2}} \epsilon}{1+ 2N^{\frac{3}{2}}\epsilon} \le d_i^{2} \le 1 + \frac{2N^{\frac{3}{2}} \epsilon}{1- 2N^{\frac{3}{2}}\epsilon}  < 1 + 4 N^{\frac{3}{2}} \epsilon
\end{equation*}
as required.
\end{proof}

\begin{remark} \label{remscalfsquan}
	We recast Lemma \ref{lemfsinj} in the case we use the usual scaling convention for the Fubini--Study map (cf.~Remark \ref{remscalhbfs}). Suppose that $\mathcal{L}^{\otimes k}$ is very ample and write $\mathcal{B}_k$ for the space of hermitian forms on the vector space $H^0 (X , \mathcal{L}^{\otimes k})$ of dimension $N_k$. The statement for this convention is as follows: if $H , H' \in \mathcal{B}_k$ satisfy $FS(H)^{\otimes k} = (1 +f) FS(H')^{\otimes k}$ with $\sup_X |f| \le \epsilon$ for $0< N_k^{\frac{3}{2}} \epsilon \le 1/4$, then we have $|| H - H' ||_{op} \le 4 N_k^{\frac{3}{2}} \epsilon$.
\end{remark}


\begin{thebibliography}{1}

\bibitem{Finski22}
Siarhei Finski.
\newblock {O}n the metric structure of section ring.
\newblock {\em arXiv preprint arXiv:2209.03853}, 2022.

\bibitem{yhqifs}
Yoshinori Hashimoto.
\newblock {Q}uantitative injectivity of the {F}ubini--{S}tudy map.
\newblock {\em arXiv preprint arXiv:2502.08038}, 2025.

\bibitem{Lem21}
L{\'a}szl{\'o} Lempert.
\newblock {O}n the {B}ergman kernels of holomorphic vector bundles.
\newblock {\em arXiv preprint arXiv:2109.08593}, 2021.

\bibitem{Sun22}
Jingzhou Sun.
\newblock {O}n {T}he {I}mage {O}f {T}he {H}ilbert {M}ap.
\newblock {\em arXiv preprint arXiv:2208.13407}, 2022.

\end{thebibliography}

\begin{thebibliography}{10}

\bibitem{amd}
Ravi~P. Agarwal, Maria Meehan, and Donal O'Regan, \emph{Fixed point theory and
  applications}, Cambridge Tracts in Mathematics, vol. 141, Cambridge
  University Press, Cambridge, 2001. \MR{1825411 (2002c:47122)}

\bibitem{aubin}
Thierry Aubin, \emph{{\'E}quations du type {M}onge-{A}mp\`ere sur les
  vari\'et\'es k\"ahl\'eriennes compactes}, Bull. Sci. Math. (2) \textbf{102}
  (1978), no.~1, 63--95. \MR{494932}

\bibitem{bbgz}
Robert~J. Berman, S\'ebastien Boucksom, Vincent Guedj, and Ahmed Zeriahi,
  \emph{A variational approach to complex {M}onge-{A}mp\`ere equations}, Publ.
  Math. Inst. Hautes \'Etudes Sci. \textbf{117} (2013), 179--245. \MR{3090260}

\bibitem{bw}
Robert~J Berman and David~Witt Nystrom, \emph{Complex optimal transport and the
  pluripotential theory of {K}\"{a}hler-{R}icci solitons}, arXiv preprint
  arXiv:1401.8264 (2014).

\bibitem{bly}
Jean-Pierre Bourguignon, Peter Li, and Shing-Tung Yau, \emph{Upper bound for
  the first eigenvalue of algebraic submanifolds}, Comment. Math. Helv.
  \textbf{69} (1994), no.~2, 199--207. \MR{1282367 (95j:58168)}

\bibitem{donproj2}
Simon~K. Donaldson, \emph{Scalar curvature and projective embeddings. {II}}, Q.
  J. Math. \textbf{56} (2005), no.~3, 345--356. \MR{2161248 (2006f:32033)}

\bibitem{donnum}
\bysame, \emph{Some numerical results in complex differential geometry}, Pure
  Appl. Math. Q. \textbf{5} (2009), no.~2, Special Issue: In honor of Friedrich
  Hirzebruch. Part 1, 571--618. \MR{2508897 (2010d:32020)}

\bibitem{groth}
Alexander Grothendieck, \emph{Techniques de construction et th\'eor\`emes
  d'existence en g\'eom\'etrie alg\'ebrique. {IV}. {L}es sch\'emas de
  {H}ilbert}, S\'eminaire {B}ourbaki, {V}ol.\ 6, Soc. Math. France, Paris,
  1995, pp.~Exp.\ No.\ 221, 249--276. \MR{1611822}

\bibitem{yhpreprint}
Yoshinori Hashimoto, \emph{Quantisation of extremal {K}\"{a}hler metrics},
  arXiv preprint arXiv:1508.02643 (2015).

\bibitem{kms}
Julien Keller, Julien Meyer, and Reza Seyyedali, \emph{Quantization of the
  {L}aplacian operator on vector bundles, {I}}, Math. Ann. \textbf{366} (2016),
  no.~3-4, 865--907. \MR{3563226}

\bibitem{git}
David Mumford, John Fogarty, and Frances Kirwan, \emph{Geometric invariant
  theory}, third ed., Ergebnisse der Mathematik und ihrer Grenzgebiete (2)
  [Results in Mathematics and Related Areas (2)], vol.~34, Springer-Verlag,
  Berlin, 1994. \MR{1304906}

\bibitem{saitak}
Shunsuke Saito and Ryosuke Takahashi, \emph{Stability of anti-canonically
  balanced metrics}, arXiv preprint arXiv:1607.05534v2 (2017).

\bibitem{yaucy}
Shing~Tung Yau, \emph{On the {R}icci curvature of a compact {K}\"ahler manifold
  and the complex {M}onge-{A}mp\`ere equation. {I}}, Comm. Pure Appl. Math.
  \textbf{31} (1978), no.~3, 339--411. \MR{480350 (81d:53045)}

\end{thebibliography}

\providecommand{\bysame}{\leavevmode\hbox to3em{\hrulefill}\thinspace}
\providecommand{\MR}{\relax\ifhmode\unskip\space\fi MR }
\providecommand{\MRhref}[2]{%
  \href{http://www.ams.org/mathscinet-getitem?mr=#1}{#2}
}
\providecommand{\href}[2]{#2}

\begin{flushleft}
Dipartimento di Matematica e Informatica ``U.~Dini'', \\
Universit\`a degli Studi di Firenze, \\
Viale Morgagni 67/A 50134 Firenze, Italy. \\

Email: \verb|yoshinori.hashimoto@unifi.it|
\end{flushleft}

\end{document}